\definecolor{plum}{rgb}{0.62, 0.0, 0.77}
\newcounter{rot}
\def\a{\alpha}   
\def\e{\varepsilon} \def\f{\phi}   
\def\G{\Gamma}  
     \def\l{\lambda}
 \def\m{\mu} \def\n{\nu} 
  \def\s{\sigma} 
\def\t{\tau}   \def\U{\Upsilon}
\newtheorem{theorem}{Theorem}
\newtheorem{lemma}[theorem]{Lemma}
\newtheorem{claim}{Claim}
\newcommand{\rdup}[1]{{\left\lceil #1 \right\rceil }}
\newcommand{\rdown}[1]{{\left\lfloor #1\right \rfloor}}
\newcommand{\brac}[1]{\left(#1\right)}
\newcommand{\bfrac}[2]{\left(\frac{#1}{#2}\right)}
\def\cE{{\cal E}}
\newcommand{\set}[1]{\left\{#1\right\}}
\def\E{\mbox{{\bf E}}}
\def\Pr{\mbox{{\bf Pr}}}
\newcommand{\ignore}[1]{}
\newcommand{\cA}{{\cal A}}
\newcommand{\card}[1]{\left|#1\right|}
\newcommand{\beq}[2]{\begin{equation}\label{#1}#2\end{equation}}
\def\wW{\widehat{W}}
\newcommand{\multstar}[1]{\begin{multline*}#1\end{multline*}}
\newcommand{\mult}[2]{\begin{multline}\label{#1}#2\end{multline}}
\author{Alan Frieze\thanks{Research supported in part by NSF grant DMS1661063
}\qquad Wesley Pegden\thanks{Research supported in part by NSF grant DMS1363136}\qquad Tomasz Tkocz\\Department of Mathematical Sciences\\Carnegie Mellon University\\Pittsburgh PA15217\\U.S.A.}
\begin{document}
\title{On random multi-dimensional assignment problems}
\maketitle
\begin{abstract}
We study random multidimensional assignment problems where the costs decompose into the sum of independent random variables. In particular, in three dimensions, we assume that the costs $W_{i,j,k}$ satisfy $W_{i,j,k}=a_{i,j}+b_{i,k}+c_{j,k}$ where the $a_{i,j},b_{i,k},c_{j,k}$ are independent uniform $[0,1]$  random variables. Our objective is to minimize the total cost and we show that w.h.p. a simple greedy algorithm is a $(3+o(1))$-approximation. This is in contrast to the case where the $W_{i,j,k}$ are independent  exponential rate 1 random variables. Here all that is known is an $n^{o(1)}$-approximation, due to Frieze and Sorkin.
\end{abstract}
\section{Introduction}
The (planar) three dimensional assignment problem is a natural generalisation of the classical assignment ptoblem. As an optimization problem it can be expressed as follows: we are given real values $W_{i,j,k}$ for $i,j,k\in [n]$ and we are asked to 
\[
Minimize \set{\sum_{i=1}^nW_{i,\s(i),\t(i)}:\;\s,\t\text{ are permutations of }[n]}.
\]
This is an NP-hard problem and occurs for example as a practical problem \cite{FY}. In this paper we study the following simple greedy heuristic:
\begin{algorithm}[H]
\caption{{\sc Greedy}($m$)}
\label{greedy2}
\begin{algorithmic}[1]
\STATE 
Let $B:=C:=[n]$, and $T:=\emptyset$.
\FOR{$i=1,\ldots,m$}
\STATE Let $W_{i,j,k} =\min\set{W_{i,j',k'} \colon j'\in B,k'\in C}$.
\STATE Add $(i,j,k)$ to $T$ and remove $j$ from $B$ and $k$ from $C$.
\ENDFOR
\STATE Return the set of triples in $T$ as a partial assignment.
\STATE Complete the assignment with one of the remaining $(n-m)!^2$ possibilities.
\end{algorithmic}
\end{algorithm}

Several authors have considered the average case where the $W_{i,j,k}$ are random variables. Kravtsov \cite{Krav1} considered the case where the $W_{i,j,k}$ are chosen randomly from $[1,M]$ where $M=n^\a$ for some $\a<1$. Here the minimum is at least $n$ and it is not difficult to show (see Section \ref{Easycase} that with the choice of $m=n-\log n$ that w.h.p. (i) {\sc greedy}($m$) runs in polynomial time and (ii) it outputs a solution of value $n+o(n)$. In this case Step 6 can be completed via the choice of an arbitrary completion.

It is more difficult to analyse the case where $L\gg n$ and the case where the $W_{i,j,k}$ are independent exponential rate 1 random variables is (essentially) a scaled version of such a case. This case was considered by Frieze and Sorkin \cite{FS} and they proved the following theorem.
\begin{theorem}[Frieze and Sorkin]
Suppose that the $W_{i,j,k}$ are independent EXP(1) random variables and that $Z_n$ denote the value of the optimum. Then (a) $\frac{1}{n}\leq \E(Z_n)=O\bfrac{\log n}{n}$ and (b) there is a polynomial time algorithm that w.h.p. finds a solution of value $\frac{1}{n^{1-o(1)}}$.
\end{theorem}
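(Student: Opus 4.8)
Both parts hinge on a single combinatorial object. For a threshold $t>0$ let $H_t$ denote the tripartite, $3$-uniform hypergraph on three copies $I,J,K$ of $[n]$ whose hyperedges are the triples $(i,j,k)$ with $W_{i,j,k}\le t$. A feasible assignment of cost at most $nt$ is precisely a perfect matching of $H_t$ (a set of $n$ hyperedges meeting every vertex of $I\cup J\cup K$ exactly once), so the whole question is for which $t$ does $H_t$ contain a perfect matching, and can one be produced in polynomial time. Since $\Pr(W_{i,j,k}\le t)=1-e^{-t}\sim t$ for the small thresholds we use, a fixed vertex lies in $\ooi n^2t$ hyperedges in expectation, and $H_t$ has $\ooi n^3t$ hyperedges in all. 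The lower bound in (a) is now immediate and pointwise: for \emph{any} permutations $\sigma,\tau$ we have $W_{i,\sigma(i),\tau(i)}\ge \min_{j,k}W_{i,j,k}$ for every $i$, so $Z_n\ge \sum_i\min_{j,k}W_{i,j,k}$. Each $\min_{j,k}W_{i,j,k}$ is the minimum of $n^2$ independent $\mathrm{EXP}(1)$ variables, hence $\mathrm{EXP}(n^2)$ with mean $n^{-2}$, and linearity gives $\E(Z_n)\ge n\cdot n^{-2}=1/n$.

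For the upper bound in (a) I would take $t=C\log n/n^2$ with $C$ a large constant, so that every vertex of $H_t$ has mean degree $\sim C\log n$. A fixed vertex is isolated with probability $\sim e^{-n^2t}=n^{-C}$, so by a union bound $H_t$ has minimum degree at least $1$ except with probability $O(n^{1-C})$, the obvious necessary condition for a perfect matching. The substantive step is to promote this to actual existence of a perfect matching: this is a Shamir-type threshold statement for random tripartite $3$-graphs, which I would obtain either from the Johansson--Kahn--Vu machinery, or --- exploiting that $C$ is a large constant --- directly, by verifying a vertex-expansion condition in all three coordinate directions and then building the matching by alternating-path augmentations (insert each uncovered vertex by rerouting the current matching along an alternating structure in the $J$- and $K$-directions, which terminates because neighbourhoods expand). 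Taking $C$ large makes the failure probability at most $n^{-5}$. On the complementary bad event $\cE$ I bound $Z_n$ crudely by the identity assignment $S:=\sum_i W_{i,i,i}$; since $\E(S^2)=O(n^2)$, Cauchy--Schwarz gives $\E(Z_n\mathbf 1_{\cE})\le\sqrt{\E(S^2)}\,\sqrt{\Pr(\cE)}=O(n)\cdot n^{-5/2}=O(n^{-3/2})=o(\log n/n)$. Combined with $Z_n\le nt=C\log n/n$ on the good event, this yields $\E(Z_n)=O(\log n/n)$.

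The algorithmic statement (b) is the delicate one, because finding a perfect matching in a $3$-uniform hypergraph is NP-hard in the worst case, so one cannot simply ``match $H_t$''; the randomness must be used. Since the target value $n^{-(1-o(1))}$ still tends to $0$, every one of the $n$ chosen triples must be cheap, so the algorithm is genuinely forced to construct a near-optimal perfect matching. My plan is to run at the slightly coarser threshold $t=n^{-2+o(1)}$ (any $t=n^{-2}\cdot n^{o(1)}$), so that each vertex has $n^{o(1)}$ incident cheap hyperedges rather than only $\log n$, and to grow a matching by repeated augmentation: maintain a partial matching and insert each remaining row $i_0$ by searching for an alternating structure that simultaneously reroutes conflicts in the $J$-coordinate and in the $K$-coordinate until a free $j$ and a free $k$ are exposed. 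The polynomially large degree surplus is what guarantees that these double alternating searches expand and reach free coordinates within $O(\log n)$ steps \whp, so the whole construction runs in polynomial time and succeeds \whp; this surplus over the $\log n$ of part (a) is exactly the source of the weaker $n^{o(1)}$ factor. The resulting assignment has cost at most $nt=n^{-1+o(1)}=n^{-(1-o(1))}$.

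The main obstacle in both parts is the matching step in the sparse random tripartite $3$-graph: for (a), existence of a perfect matching essentially at the degree-$1$ threshold $\log n/n^2$, and for (b), the upgrade of that existence to a polynomial-time, \whp\ correct construction. The alternating-augmentation analysis --- showing that the $J$- and $K$-direction searches expand and terminate quickly on the random instance, and that a constant (respectively $n^{o(1)}$) degree surplus suffices --- is where essentially all the work lies; the lower bound, the reduction to $H_t$, and the control of the bad event are routine.
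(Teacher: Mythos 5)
You should first note a mismatch with the assignment itself: this theorem is not proved in the paper you were given --- it is quoted as background from Frieze and Sorkin \cite{FS} --- so the only meaningful comparison is with that source. Measured against it, your lower bound is complete, correct, and the standard argument: $Z_n\ge\sum_{i}\min_{j,k}W_{i,j,k}$ pointwise, each row minimum is the minimum of $n^2$ independent EXP$(1)$ variables and hence EXP$(n^2)$ with mean $n^{-2}$, giving $\E(Z_n)\ge 1/n$. Your reduction of the upper bound to perfect matchings in the random tripartite $3$-uniform hypergraph $H_t$ at $t=C\log n/n^2$, with the Johansson--Kahn--Vu theorem supplying the matching, is also exactly the route taken in \cite{FS}, and the Cauchy--Schwarz control of the bad event is fine, provided you invoke a quantitative (inverse-polynomial failure probability) form of JKV for large $C$ rather than a bare w.h.p.\ statement.

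The genuine gaps are in your proposed alternative for (a) and in the substance of (b). The ``or directly'' branch of part (a) --- minimum degree $1$ plus vertex expansion plus alternating-path augmentation --- would fail and should be deleted: unlike bipartite matching, $3$-dimensional matching has no Berge/augmenting-path theory (an alternating move that frees a vertex in the $J$-direction can re-cover one in the $K$-direction, and there is no analogue of Hall's condition to certify progress). This is precisely why Shamir's problem, at the very density $\log n/n^2$ you are working at, stood open for decades until the Johansson--Kahn--Vu machinery; no elementary expansion argument at that density is known, so part (a) stands only on JKV. For part (b), your plan has the right shape and is in the spirit of the algorithm in \cite{FS} (grow the matching through augmenting structures among cheap triples, exploiting an $n^{o(1)}$ degree surplus, which is the source of the $n^{o(1)}$ loss), but everything that makes it a theorem is asserted rather than proved: that the double alternating searches expand, that they terminate within $O(\log n)$ levels, that the search tree stays polynomial, and --- crucially --- that the conditioning created by earlier augmentations and exposed costs leaves enough fresh randomness to sustain the expansion. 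Managing that conditioning is where essentially all of the work in \cite{FS} lies; as written, your part (b) is a plausible plan, not a proof.
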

This is where the problem stands for such $W_{i,j,k}$ and here we consider the case where
\beq{costs}{
W_{i,j,k}=a_{i,j}+b_{i,k}+c_{j,k},\,1\leq i,j,k\leq n,
}
where the $a_{i,j},b_{i,k},c_{j,k}$ are independent uniform $[0,1]$ random variables.

We note that the problem considered in \cite{FY} was of the form given in \eqref{costs}. We will prove the following theorem.
\begin{theorem}\label{th1}
There exist constants $c_1,c_2$ such that (a) $\E(Z_n)\geq c_1n^{1/3}$ and (b) {\sc greedy}($n-n^{1/4}$) finds a solution of expected value at most $c_2n^{1/3}$. In this case Step 6 can be completed by choosing an arbitrary completion.
\end{theorem}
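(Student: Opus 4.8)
The key structural observation is that for any permutations $\s,\t$ the assignment cost splits as
\[
\sum_{i=1}^n W_{i,\s(i),\t(i)} = \sum_{i=1}^n a_{i,\s(i)} + \sum_{i=1}^n b_{i,\t(i)} + \sum_{i=1}^n c_{\s(i),\t(i)},
\]
and since $\s,\t$ are permutations, the $3n$ variables $a_{i,\s(i)}$, $b_{i,\t(i)}$, $c_{\s(i),\t(i)}$ ($i\in[n]$) are \emph{distinct} entries drawn from the three independent families, hence form $3n$ independent $U[0,1]$ variables. This single fact drives part (a). Part (b) rests on the heuristic that, among the $s^2$ entries $W_{i,j,k}$ available to {\sc greedy} when $s$ rows and columns remain, the minimum is typically of order $s^{-2/3}$, because $\Pr(W_{i,j,k}\le t)\sim t^3/6$ for small $t$, so that $s^2t^3\sim 1$.

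For part (a), fix $\s,\t$ and set $Z(\s,\t):=\sum_i W_{i,\s(i),\t(i)}$. By the observation above this is a sum of $3n$ independent uniforms, so for any $V\ge0$,
\[
\Pr(Z(\s,\t)\le V)\le \frac{V^{3n}}{(3n)!},
\]
the right-hand side being the volume of $\set{x\in\R_{\ge0}^{3n}:\sum_l x_l\le V}$, which contains the true region $[0,1]^{3n}\cap\set{\sum_l x_l\le V}$. A union bound over the $(n!)^2$ pairs $(\s,\t)$ gives $\Pr(Z_n\le V)\le (n!)^2 V^{3n}/(3n)!$. Taking $V=c_1n^{1/3}$ and applying Stirling, the bound behaves like $\brac{c_1^3e/27}^n$ up to polynomial factors, which tends to $0$ whenever $c_1<3e^{-1/3}$. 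Hence $Z_n\ge c_1n^{1/3}$ \whp, and since $Z_n\ge0$ this yields $\E(Z_n)\ge(1-o(1))c_1n^{1/3}$, proving (a).

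For part (b), write $s=n-i+1$ for the number of available indices in $B$ and $C$ at step $i$, and let $M_i$ be the cost incurred there. The $n^{1/4}$ rows completed arbitrarily in Step 6 contribute at most $3n^{1/4}=o(n^{1/3})$, so it suffices to bound $\sum_{i=1}^m\E(M_i)$. The plan is to condition on the history $\mathcal{H}_{i-1}$ generated by the first $i-1$ steps and use two facts: (i) the variables $a_{i,\cdot}$ and $b_{i,\cdot}$ of the current row have never been inspected, so conditionally they are fresh independent uniforms; and (ii) the only information $\mathcal{H}_{i-1}$ carries about an entry $c_{j,k}$ with $j\in B_i$, $k\in C_i$ is that $c_{j,k}\ge M_{i'}-a_{i',j}-b_{i',k}$ for each earlier step $i'$, a lower bound that is negative, hence vacuous, unless $a_{i',j}+b_{i',k}$ is smaller than $M_{i'}=O(s^{-2/3})$. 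Writing $u_k=\min_{j\in B_i}(a_{i,j}+c_{j,k})$ and conditioning further on $a_{i,\cdot}$, the freshness of $b_{i,\cdot}$ gives the exact product formula
\[
\Pr(M_i>t\mid \mathcal{H}_{i-1},a_{i,\cdot})=\prod_{k\in C_i}\brac{1-(t-u_k)_+},
\]
so $\Pr(M_i>t\mid\mathcal{H}_{i-1})\le \E\exp\brac{-\sum_{k\in C_i}(t-u_k)_+}$. A computation with the (nearly unbiased) conditional law of the $c_{j,k}$ shows $\sum_k(t-u_k)_+$ is of order $s^2t^3$ for $t\sim s^{-2/3}$, giving $\Pr(M_i>t\mid\mathcal{H}_{i-1})\le\exp(-\Omega(s^2t^3))$ and hence $\E(M_i\mid\mathcal{H}_{i-1})=O(s^{-2/3})$. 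Summing, $\sum_{i=1}^m\E(M_i)=O\brac{\sum_{s\ge n^{1/4}}s^{-2/3}}=O(n^{1/3})$, which is (b).

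The main obstacle is precisely the dependence hidden in (ii): the sets $B_i,C_i$ and the conditional law of the surviving sub-matrix of $c$ are determined by the history, so the entries $c_{j,k}$ with $j\in B_i,\,k\in C_i$ are biased \emph{upward}, and at the relevant scale $t\sim s^{-2/3}$ this bias is of the same order as the entries that matter. The plan is to show that only a vanishing fraction of available cells are appreciably biased: since the earlier minima satisfy $M_{i'}=O((s')^{-2/3})$ with $s'>s$, the expected fraction of pairs $(j,k)$ for which the lower bound in (ii) exceeds $\theta\sim s^{-2/3}$ is at most $\sum_{s'>s}(s')^{-4/3}=O(s^{-1/3})=o(1)$. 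Consequently the per-column counts $\card{\set{j\in B_i:c_{j,k}\le t}}$ agree with those of a genuinely uniform matrix up to a factor $1+o(1)$, which is all the estimate on $\sum_k(t-u_k)_+$ requires. Controlling this bias rigorously, together with the analogous effect of deleting the chosen rows and columns, is the technical heart of the argument.
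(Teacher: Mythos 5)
Part (a) of your proposal is correct and takes a genuinely different route from the paper. The paper lower-bounds $\E(Z_n)$ by $n\E(W_n)$, where $W_n$ is the minimum weight in a single plane $\set{i}\times[n]^2$, and computes $\E(W_n)\approx 6^{1/3}\G(4/3)n^{-2/3}$ via the delicate analysis of Lemma \ref{lem1}. Your first-moment argument --- the $3n$ entries $a_{i,\s(i)},b_{i,\t(i)},c_{\s(i),\t(i)}$ are distinct (the pairs $(\s(i),\t(i))$ have distinct first coordinates) and hence independent uniforms, the simplex-volume bound $\Pr(Z(\s,\t)\le V)\le V^{3n}/(3n)!$, and a union bound over $(n!)^2$ pairs --- is sound, shorter, and in fact yields a better constant: any $c_1<3e^{-1/3}\approx 2.15$, versus the paper's $6^{1/3}\G(4/3)\approx 1.62$. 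The trade-off is that Lemma \ref{lem1} does double duty in the paper: the plane-minimum analysis is reused as the engine of the greedy analysis, whereas your union bound contributes nothing to part (b).

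Part (b), however, contains a genuine gap, and you name it yourself: the whole argument hinges on showing that the upward conditioning of the surviving $c_{j,k}$ is negligible, and your proposal ends with ``the plan is to show\dots'' rather than a proof. More pointedly, the one quantitative ingredient you do supply --- the expected fraction of appreciably biased pairs is at most $\sum_{s'>s}(s')^{-4/3}=O(s^{-1/3})$, to be combined with Markov --- is precisely the bound the paper derives in \eqref{EZ} (an expected overlap of order $L^2\nu_m^{1/3}$) and then explicitly rejects as ``not good enough'' to prove \eqref{Xell}: an expectation bound cannot be converted into the $o(n^{-3})$-failure-probability statement needed to control all $n-n^{1/4}$ steps simultaneously and to carry the estimate through the conditioning on the history. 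The missing idea is the paper's combinatorial boost: bound the probability that the set $Y_m$ of previously exposed pairs inside $I_m\times J_m$ contains a \emph{matching} of constant size $s=40$ (the first-moment computation \eqref{compl}, failure probability $o(n^{-3})$), and then use the deterministic observation that if the maximum matching in $Y_m$ has at most $s$ edges then $|Y_m|\le s(|I_m|+|J_m|)=O(L\nu_m^{1/3})$, a vanishing fraction of $|I_m\times J_m|$. Two secondary soft spots: your bias estimate invokes $M_{i'}=O((s')^{-2/3})$, which is the very quantity being established, so as sketched the argument is circular unless run as an induction with high-probability (not merely expected) control of each $M_{i'}$ --- the paper sidesteps this by thresholding deterministically at $L/\nu_\ell^{2/3}$ through the definitions of $I_\ell,J_\ell,X_\ell$, so the set of exposed $c$-entries is bounded without reference to the random minima; and your product formula $\Pr(M_i>t\mid\mathcal{H}_{i-1},a_{i,\cdot})=\prod_{k}\brac{1-(t-u_k)_+}$ requires the $c$-values (hence the $u_k$) to be fixed by the conditioning, a repairable but real imprecision as written.
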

Before giving a proper proof, we give a heuristic argument for (a). Fix $i$ and consider $W_{i,j,k}$. For $W_{i,j,k}$ to be of order $n^{-\a}$ say we need each of 3 uniform $[0,1]$ varables to be of order $n^{-\a}$. This happens with probability $O(n^{-3\a})$ and there are $n^2$ choices and $3\a=2$ gives the largest value for $\a$. Summing over $i$ gives (a).

We discuss the rigorous proof of Theorem \ref{th1} in Section \ref{Proof} and in Section \ref{Higher} we consider the extension to higher dimensions.
\subsection{Preliminaries}
We sometimes refer to the Hoeffding bounds for the $S=S_1+S_2+\ldots+S_N$ where $S_1,S_2,\ldots,S_N\in [0,1]$ are independent and $\E(S_1)+\E(S_2)+\cdots+\E(S_N)=N\m$:
\beq{Chern}{
\Pr(|S-N\m|\geq \e N\m)\leq 2e^{-\e^2N\m/3}.
}
We say that a sequence of events $\cE_n$ occur {\em quite surely} if $\Pr(\neg\cE_n)=O(n^{-K})$ for any constant $K>0$.
\section{Proof of Theorem \ref{th1}}\label{Proof}
We begin by analysing the distribution of the smallest weight element in $\Pi_i=\set{i}\times [n]^2$.
\subsection{Weights in a fixed plane}\label{wts}
Let 
\[
W_n=\min\set{a_i+b_j+c_{i,j}:\text{$a_i,b_j,c_{i,j},\,i,j\in[n]$ are independent uniform $[0,1]$ random variables}}.
\]
\begin{lemma}\label{lem1}
$\E(W_n)\approx c_1n^{-2/3}$, where $c_1=6^{1/3}\G(3/4)$, where $\G$ denotes Euler's Gamma function.
\end{lemma}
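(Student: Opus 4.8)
The plan is to compute the asymptotics of $\E(W_n)=\int_0^\infty \Pr(W_n>t)\,dt$ by first identifying, for each fixed $s>0$, the limit of $\Pr(W_n>sn^{-2/3})$ and then integrating. The natural first move is to condition on the $2n$ variables $(a_i)_{i\in[n]}$ and $(b_j)_{j\in[n]}$ and exploit that the $n^2$ variables $c_{i,j}$ are independent. Since $W_n>t$ holds iff $c_{i,j}>t-a_i-b_j$ for every pair $(i,j)$, and since for $t<1$ we have $t-a_i-b_j\le t<1$, I get
\[
\Pr(W_n>t\mid (a_i),(b_j))=\prod_{i,j}\brac{1-(t-a_i-b_j)_+},
\]
where $x_+=\max\set{x,0}$. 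Writing $t=sn^{-2/3}$ and $S_n=S_n(s)=\sum_{i,j}(t-a_i-b_j)_+$, the point is that every factor is $1-O(n^{-2/3})$ while only $O(n^{2/3})$ of them differ from $1$, so $\log\prod_{i,j}\brac{1-(t-a_i-b_j)_+}=-S_n-\tfrac12\sum_{i,j}(t-a_i-b_j)_+^2-\cdots$, and the quadratic-and-higher remainder is negligible: its expectation is $n^2\E[(t-a-b)_+^2]+\cdots=\tfrac{1}{12}s^4n^{-2/3}+o(n^{-2/3})\to0$. Hence $\Pr(W_n>sn^{-2/3})=\E[e^{-S_n}]+o(1)$.

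The crux is then to show the bilinear statistic $S_n$ concentrates at its mean. A direct computation gives $\E[S_n]=n^2\int_0^1\int_0^1(t-a-b)_+\,da\,db=n^2\cdot\tfrac{t^3}{6}=\tfrac{s^3}{6}$. For the variance I would expand $\var(S_n)=\sum\mathrm{Cov}\brac{(t-a_i-b_j)_+,(t-a_{i'}-b_{j'})_+}$ and split according to how many indices the two pairs share; only the diagonal ($i=i',j=j'$) and the two families sharing exactly one index contribute. The $n^2$ diagonal terms contribute $O(n^2t^4)=O(s^4n^{-2/3})$, while each ``one shared index'' family ($\approx n^3$ terms, each with covariance $\E_a[(\tfrac12(t-a)_+^2)^2]-(\E f)^2=\tfrac{t^5}{20}+o(t^5)$) contributes $O(n^3t^5)=O(s^5n^{-1/3})$. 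Thus $\var(S_n)\to0$ for each fixed $s$, so $S_n\to s^3/6$ in probability, and since $0\le e^{-S_n}\le1$, bounded convergence gives $\Pr(W_n>sn^{-2/3})\to e^{-s^3/6}$.

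Finally, $n^{2/3}\E(W_n)=\int_0^\infty\Pr(W_n>sn^{-2/3})\,ds$, and I want to pass the limit inside the integral. I expect the main obstacle to be a dominating bound valid uniformly in $n$. For this I would use $\Pr(W_n>t\mid a,b)\le e^{-S_n}$ together with the lower bound $S_n\ge\tfrac{t}{3}\,\card{\set{i:a_i\le t/3}}\cdot\card{\set{j:b_j\le t/3}}$; the two counts are independent binomials with means $\tfrac{t}{3}n=\tfrac{s}{3}n^{1/3}\to\infty$, so standard concentration forces $S_n\gtrsim s^3$ with overwhelming probability, yielding a tail estimate $\Pr(W_n>sn^{-2/3})\le C e^{-cs^3}$ on the range $sn^{-2/3}\le1$; on the remaining range $t\ge1$ the bound $\Pr(W_n>t)\le(1-\tfrac{t^3}{6})^n\le(5/6)^n$ coming from the $n$ disjoint triples $a_i+b_i+c_{i,i}$ makes that contribution $o(1)$. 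This supplies an integrable dominating function, so dominated convergence gives $n^{2/3}\E(W_n)\to\int_0^\infty e^{-s^3/6}\,ds$. The substitution $u=s^3/6$ evaluates this to $\tfrac{6^{1/3}}{3}\G(1/3)=6^{1/3}\G(4/3)$, which is the asserted constant $c_1$ (here $\G(4/3)=\tfrac13\G(1/3)$; note the integral gives $\G(4/3)$ rather than $\G(3/4)$, the two being numerically $1.622$ versus $2.227$). The substantive technical work is the variance bound and the uniform tail estimate; the remaining steps are routine bookkeeping.
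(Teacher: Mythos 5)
Your proof is correct, but it takes a genuinely different route from the paper's. You compute the limiting distribution directly: conditioning on the $a_i,b_j$ gives the exact product formula $\Pr(W_n>t\mid a,b)=\prod_{i,j}\brac{1-(t-a_i-b_j)_+}$, and your second-moment argument for $S_n=\sum_{i,j}(sn^{-2/3}-a_i-b_j)_+$ is sound (the pairs sharing no index are independent, the one-shared-index covariances of order $n^3t^5=O(s^5n^{-1/3})$ are the dominant but still vanishing contribution), giving $\Pr(W_n>sn^{-2/3})\to e^{-s^3/6}$ pointwise; your dominating tail bound via the binomial counts $\card{\set{i:a_i\le t/3}}$, $\card{\set{j:b_j\le t/3}}$ together with the $(5/6)^n$ bound from the $n$ disjoint triples $a_i+b_i+c_{i,i}$ legitimately yields $n^{2/3}\E(W_n)\to\int_0^\infty e^{-s^3/6}\,ds=6^{1/3}\G(4/3)$ by dominated convergence. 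The paper instead argues combinatorially: it restricts to the index sets $I,J$ with $a_i,b_j\le L/n^{2/3}$ (where $L=\log n$), shows the bipartite graph of cheap pairs $c_{i,j}$ on $I\times J$ is q.s. a matching $M$ plus $O(1)$ extra edges (Claim \ref{cl1}), so the candidate minima are essentially $|M|=\Theta(L^3)$ independent copies of $p(U_1+U_2+U_3)$, and then evaluates the expected minimum of these, handling the extra edges separately. Your route is more self-contained and proves strictly more, namely the limit law for $n^{2/3}W_n$, avoiding the almost-matching bookkeeping and the conditional manipulations around the event $\cE_M$. What the paper's setup buys is reusability in the greedy analysis: Lemma \ref{lem2} reruns the same argument conditionally on the history of {\sc greedy}, and the localization to the cheap set $X\subseteq I_m\times J_m$ is precisely what lets the authors argue that the relevant $c_{i,j}$ remain unconditioned after earlier rounds; your global product formula would need nontrivial reworking to serve that purpose. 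Finally, you are right to flag the constant: the lemma statement's $\G(3/4)$ is a typo for $\G(4/3)$ --- the paper's own computation via $\int_0^\infty e^{-x^3/6}\,dx$ produces $6^{1/3}\G(4/3)$, which agrees with your value.
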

\begin{proof}
Let
\beq{defL}{
L=\log n,\; I=\set{i:a_i\leq \frac{L}{n^{2/3}}},\; J=\set{j:b_j\leq \frac{L}{n^{2/3}}},\; X=\set{(i,j)\in I\times J:c_{i,j}\leq \frac{L}{n^{2/3}}}.
}
It follows from \eqref{Chern} that 
\beq{conc1}{
|I,|J|\in \left[\frac12Ln^{1/3},\frac32Ln^{1/3}\right]\quad q.s.
}
Conditional on the sizes of $I,J$ we have $|X|$ is distributed as $B(|I|\cdot|J|,L/n^{2/3})$. It follows from \eqref{Chern} that
\beq{sizeX}{
|X|\in \left[\frac{L^3}{8},10L^3\right]\quad q.s.
}
Thus let $\cE_L$ denote the even that $|X|\in \left[\frac{L^3}{8},10L^3\right]$.

Let $\cE_M$ denote the event that the edges in $X$ {\bf almost} form a matching. By this we mean that the graph induced by $X$ consists of a matching $M$ plus at most 4 extra edges $Y$. Then, 
\beq{ExZ}{
\E(W_n\mid\cE_M)\Pr(\cE_M)\leq \E(W_n)\leq \E(W_n\mid\cE_M)+3\Pr(\neg\cE_M).
}
We first deal with $\Pr(\neg\cE_M)$ by showing that.
\beq{notEM}{
\Pr(\neg\cE_M)=O\bfrac{L^{15}}{n}.
}
Let
\[
p=\frac{L}{n^{2/3}}.
\] 
Condition on $I,J$ satisfying \eqref{conc1}. Let $\G_X$ be the graph induced by $X$ and note that it is distributed as the binomial random graph $G_{|I|,|J|,p}$. 
\begin{claim}\label{cl1}
The following holds with probability $1-O(L^{15}/n)$: (i) $\G_X$ has no component with 4 or more edges and (ii) $\G_X$ has at most one component with 3 edges and (iii) $\G_X$ has at most 2 components with 2 edges.
\end{claim}
{\bf Proof of claim:} Let $K=|I|+|J|$.
\[
\Pr(\neg(i))=O\brac{\binom{K}{5}p^4}=O\bfrac{L^9n^{5/3}}{n^{8/3}}=O\bfrac{L^9}{n}.
\]
\[
\Pr(\neg(ii))=O\brac{\brac{\binom{K}{4}p^3}^2}=O\bfrac{L^{14}n^{8/3}}{n^{4}}= O\bfrac{L^{14}}{n^{4/3}}.
\]
\[
\Pr(\neg(iii))=O\brac{\brac{\binom{K}{3}p^2}^3}=O\bfrac{L^{15}n^{3}}{n^{4}}= O\bfrac{L^{15}}{n}.
\]
{\bf End of proof of claim.}

Now given $\cE_M$ we let $\wW_n$ denote the minimum weight in $M$ and we see that $\wW_n$ is the minimum of $|M|$ independent copies of $U=(U_1+U_2+U_3)p$ where $U_1,U_2,U_3$ are independent uniform $[0,1]$. 

Thus 
\[
\f(u)=\Pr(U\geq pu)=1-\frac16\sum_{k=0}^{\rdown{u}}(-1)^k\binom{3}{k}(u-k)^3.
\]
It follows that 
\begin{align}
\E(\wW_n\mid \cE_L,\cE_M,|M|)&=p\int_{u=0}^3\Pr(\wW_n\geq up\mid\cE_L,\cE_M,|M|)du\nonumber\\
&=p\int_{u=0}^3\f(u)^{|M|}du\nonumber\\
&=p(I_1+I_2+I_3),\label{I1I2I3}
\end{align}
where
\begin{align}
I_1&=\int_{u=0}^1\brac{1-\frac{u^3}{6}}^{|M|}du\label{f1}\\
&=\int_{u=0}^{1/L^{2/3}}\brac{1-\frac{u^3}{6}}^{|M|}du+ \int_{u=1/L^{2/3}}^1\brac{1-\frac{u^3}{6}}^{|M|}du \label{f2}\\
&=\int_{u=0}^{1/|L^{2/3}}\exp\set{-|M|u^3/6+O(|M|u^6)}du+O(e^{-\Omega(|M|/L^2)})\nonumber\\
&=\brac{1+O\bfrac{1}{L^2}}\int_{u=0}^{1/|L^{2/3}}e^{-|M|u^3/6}du+O(e^{-\Omega(|M|/L^2)})\nonumber\\
&=\brac{1+O\bfrac{1}{L^2}}\int_{u=0}^{\infty}e^{-|M|u^3/6}du\nonumber\\
&=\frac{\brac{1+O(L^{-2}})}{|M|^{1/3}}\int_{x=0}^\infty e^{-x^3/6}dx\nonumber\\
&=\frac{\brac{6^{1/3}\G(4/3)+O(L^{-2})}}{|M|^{1/3}}.\nonumber
\end{align}
Now because $\f(u)$ dereases monotonically with $u$ we have
\[
I_2=\int_{u=1}^2\f(u)^{|M|}du\leq \bfrac{5}{6}^{|M|}\text{ and }I_3=\int_{u=2}^3\f(u)^{|M|}du\leq \bfrac{5}{6}^{|M|}.
\]
Thus,
\beq{E1}{
\E(\wW_n\mid \cE_L,\cE_M,|M|)=\frac{\brac{6^{1/3}\G(4/3)+O(L^{-2})}}{|M|^{1/3}}p.
}
Integrating $|M|$ from \eqref{E1} we obtain
\beq{E2}{
\E(\wW_n\mid \cE_L,\cE_M)=\brac{6^{1/3}\G(4/3)+O(L^{-2})}\times \E((Bin(|I|\cdot|J|,p)-O(1))^{-1/3})\times p.
}
Given $\cE_L$ we see that the binomial is q.s. much greater than 4. Now, for $Nq$ large, we have, from \eqref{Chern}, that for $\e>0$,
\begin{align}
\E((Bin(N,q)-O(1))^{-1/3})&=\sum_{k=5}^N\binom{n}{k}q^k(1-q)^{N-k}(k-O(1))^{-1/3}\nonumber\\
&=\sum_{k=(1-\e)Nq}^{(1+\e)Nq}\binom{n}{k}q^k(1-q)^{N-k}(k-O(1))^{-1/3}+2e^{-\e^2Nq/3}\nonumber\\
&=\frac{1+O(\e)}{(Nq)^{1/3}}+O(e^{-\e^2Nq/3}),\label{E4}
\end{align}
provided $\e^2Nq\gg\log Nq$.

It then follows from \eqref{E2} that
\beq{E3}{
E(\wW_n\mid \cE_L,\cE_M)\approx \frac{6^{1/3}\G(4/3)p}{(|I|\cdot|J|\cdot p)^{1/3}}
}
Arguing as for \eqref{E4} and using the independence and concentration of $|I|,|J|$ around $Ln^{1/3}$, we see that
\beq{E5}{
E(\wW_n\mid \cE_M)\approx \frac{6^{1/3}\G(4/3)}{n^{2/3}}.
}
We now have to deal with the at most 4 edges in $Y$, since $W_n=\min\set{\wW_n,Z}$ where $Z$ is the minimum of at most 4 copies of $(U_1+U_2+U_3)p$, where $U_1,U_2,U_3$ are i.i.d. $U[0,1]$. Clearly $\E(W_n)\leq \E(\wW_n)$ and we need to argue that it is not much smaller. So, let $\cA=\set{\wW_n\leq pL^{-1/2}\leq Z}$. Now we have $\Pr(\cA)=1-O(L^{-1/2})$ and $\E(W_n)\geq \E(\wW_n\mid \cA)\Pr(\cA)$ and so we only have to verify now that $\E(\wW_n\mid\cA)$ is asymptotically equal to $\E(\wW_n)$. Now because $\wW_n$ and $Z$ are independent, we have, given $|M|$,
\begin{align}
\E(\wW_n\mid \cA)&=\E(\wW_n\mid \wW_n\leq pL^{-1/2}) =   \frac{1}{\Pr(\wW_n\leq pL^{-1/2})}\int_{u=0}^{pL^{-1/2}}\Pr(pL^{-1/2}\geq \wW_n\geq u) du\nonumber\\
&= \frac{1}{\Pr(\wW_n\leq pL^{-1/2})}\int_{u=0}^{pL^{-1/2}}\Pr(\wW_n\geq u) du - pL^{-1/2}\frac{\Pr(\wW_n > pL^{-1/2})}{\Pr(\wW_n\leq pL^{-1/2})}.\label{TT1}
\end{align}
Now  
\[
\Pr(\wW_n > pL^{-1/2}) = \brac{1-\frac{(L^{-1/2})^3}{6}}^{|M|} \leq e^{-|M|L^{-1/6}/6}.
\]
Furthermore,
\[
\Pr(\wW_n\geq u)\geq \brac{1-\frac{u^3}{6}}^{|M|}
\]
and so integral in the first term of \eqref{TT1} is at least
\[
p\int_{p=0}^{L^{-1/2}}\brac{1-\frac{u^3}{6}}^{|M|}du.
\] 
Thus
\[
\E(\wW_n\mid \cA)\geq (1-o(1))p\int_{p=0}^{L^{-1/2}}\brac{1-\frac{u^3}{6}}^{|M|}du- e^{-|M|L^{-1/6}/6}
\]
and we can proceed as for our estimation of $I_1$.

The lemma now follows after applying \eqref{ExZ} and \eqref{notEM}.
\end{proof}
This proves Part (a) of Theorem \ref{th1}, since clearly, $\E(Z_n)\geq n\E(W_n)$.
\subsection{Analysis of Greedy}
Let now $W_m$ denote the the weight of the triple $(i,j,k)$ added in the $m$th round of greedy.
\begin{lemma}\label{lem2}
If $m\leq n-n^{1/4}$ then 
\beq{EWell}{
\E(W_m) \lesssim c_1(n-m+1)^{-2/3}.
}
\end{lemma}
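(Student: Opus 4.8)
The plan is to reduce the analysis of round $m$ to Lemma~\ref{lem1} with $n$ replaced by $\nu:=n-m+1\ge n^{1/4}$. First I would condition on the history $\mathcal{H}$ of the first $m-1$ rounds of {\sc greedy}; this fixes the surviving column sets $B,C$ with $|B|=|C|=\nu$, and, since row $m$ has never been examined, the variables $a_{m,j},b_{m,k}$ ($j\in B,\,k\in C$) are i.i.d.\ $U[0,1]$ and independent of $\mathcal{H}$, with
\[
W_m=\min\set{a_{m,j}+b_{m,k}+c_{j,k}:j\in B,\ k\in C}.
\]
Apart from one point this is precisely the quantity $W_\nu$ of Lemma~\ref{lem1}: the fresh arrays $(a_{m,j}),(b_{m,k})$ play the roles of $(a_i),(b_j)$ there. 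The single discrepancy -- and the heart of the proof -- is that the entries $c_{j,k}$ with $j\in B,\,k\in C$ are \emph{not} fresh: each was inspected in every earlier round in which both $j$ and $k$ were available, and the fact that $(j,k)$ survived biases $c_{j,k}$ upward.

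The key step is to show that this bias is negligible on the relevant scale $p=L(n-m+1)^{-2/3}$. For a surviving pair $(j,k)$, the only information $\mathcal{H}$ carries about $c_{j,k}$ is that in each earlier round $i$ (with $j,k$ both available) the triple $(i,j,k)$ was not the minimum, i.e.\ $c_{j,k}\ge W_i'-a_{i,j}-b_{i,k}$, where $W_i'$ is the minimum weight over the \emph{other} available pairs; by an a priori first-moment bound $W_i'$ is of order $(n-i+1)^{-2/3}$ (up to logarithmic factors). Writing $\tau_{jk}=\max_i\brac{W_i'-a_{i,j}-b_{i,k}}^+$, the conditional law of $c_{j,k}$ given $\mathcal{H}$ is uniform on $[\tau_{jk},1]$, and $\tau_{jk}>0$ forces $a_{i,j}+b_{i,k}<W_i'$ in some round. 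As $a_{i,j},b_{i,k}$ are fresh and independent of $W_i'$, this has probability $\tO{(n-i+1)^{-4/3}}$ per round, so
\[
\Pr(\tau_{jk}>0)\le\sum_{i=1}^{m-1}\tO{(n-i+1)^{-4/3}}=\tO{\nu^{-1/3}}=o(1),
\]
while $\tau_{jk}\le\max_i W_i'=\tO{\nu^{-2/3}}=\tO{p/L}$ always. Thus each $c_{j,k}$ is uniform up to a $(1-o(1))$ factor on the scale $p$, and -- crucially -- on the event $\set{\tau_{jk}=0}$ the variable $c_{j,k}$ is \emph{exactly} $U[0,1]$ and independent of $\mathcal{H}$.

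I would then mirror the proof of Lemma~\ref{lem1} with $n$ replaced by $\nu$. Using the fresh $a_m,b_m$, the sets $I=\set{j\in B:a_{m,j}\le p}$ and $J=\set{k\in C:b_{m,k}\le p}$ satisfy $|I|,|J|\in[\frac12 L\nu^{1/3},\frac32 L\nu^{1/3}]$ q.s., and $X=\set{(j,k)\in I\times J:c_{j,k}\le p}$ has $|X|=\Theta(L^3)$ by the marginal bound above together with a second-moment estimate. The union bounds of Claim~\ref{cl1} become $O(\mathrm{poly}(L)\,\nu^{-1})=o((n-m+1)^{-2/3})$ once $\nu\ge n^{1/4}$, so $X$ is again almost a matching $M$. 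Now I discard the edges of $M$ with $\tau_{jk}>0$; their number is $o(|M|)$ by the first-moment bound $\E[\#\set{\tau_{jk}>0}]\le|M|\cdot\tO{\nu^{-1/3}}$, and on the remaining $(1-o(1))|M|$ edges the three costs are, by the previous paragraph, independent and uniform on $[0,p]$, exactly as in Lemma~\ref{lem1}. Since $W_m$ is at most the minimum weight over these retained edges, the computation \eqref{I1I2I3}--\eqref{E5} applies (with $|M|$ replaced by the retained count, which changes the $|M|^{-1/3}$ prefactor only by $1+o(1)$) and gives $\E(W_m\mid\mathcal{H})\le(c_1+o(1))(n-m+1)^{-2/3}$. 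The discarded q.s./whp-failure events contribute only $o((n-m+1)^{-2/3})$ because $W_m\le 3$ deterministically, so averaging over $\mathcal{H}$ yields \eqref{EWell}.

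The main obstacle is the control of the upward bias of the reused costs $c_{j,k}$ -- in particular, arranging that the edges which can realise the round-$m$ minimum carry genuinely uniform, mutually independent $c$-values. The saving grace is quantitative: a non-selection constraint on $c_{j,k}$ is active only when a fresh pair sum $a_{i,j}+b_{i,k}$ falls below the tiny competing minimum $W_i'$ of order $(n-i+1)^{-2/3}$, and summing these probabilities over the $m-1$ rounds gives $\tO{\nu^{-1/3}}=o(1)$. This is exactly why the threshold $m\le n-n^{1/4}$ (equivalently $\nu\ge n^{1/4}$) is imposed: it keeps every $\mathrm{poly}(\log n)\cdot\nu^{-\delta}$ error term comfortably $o(1)$, and it leaves only $n^{1/4}=o(n^{1/3})$ rounds to be completed arbitrarily in Theorem~\ref{th1}.
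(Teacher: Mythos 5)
Your setup is sound and in fact matches the paper's in outline: you reduce round $m$ to Lemma \ref{lem1} at scale $\nu_m=n-m+1$ using the fresh row variables $a_{m,\cdot},b_{m,\cdot}$, and you correctly identify that the whole difficulty is the conditioning of the reused $c_{j,k}$. Your description of that conditioning is actually cleaner than the paper's: given all exposed variables, the surviving $c_{j,k}$ are independent uniforms on $(\tau_{jk},1]$, and on $\set{\tau_{jk}=0}$ they are exactly $U[0,1]$ and independent of the history; the paper instead does exposure bookkeeping, tracking the previously-examined pairs $\bigcup_{l<m}X_l$ and excluding them from the round-$m$ candidate set, which is the same idea. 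Your per-round estimate $\Pr(\tau_{jk}>0)$ summing to $\tO{\nu_m^{-1/3}}$ is the analogue of the paper's \eqref{one}--\eqref{EZ} (modulo two small fixable points: $W_i'$ is not independent of $a_{i,j},b_{i,k}$ since competing pairs can share the row $j$ or column $k$, so you should compare against the minimum over pairs avoiding both; and you need a second-moment or q.s.\ tail bound on the round minima $W_i$ to make the $\tO{\nu_i^{-4/3}}$ rigorous).

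The genuine gap is in how you dispose of the biased edges. You discard the edges of $M$ with $\tau_{jk}>0$, justify that their number is $o(|M|)$ \emph{by the first-moment bound alone}, and charge all failure events at $W_m\le 3$. But the first moment only gives, via Markov, $\Pr(M\text{ contains a biased edge})\le \tO{L^3\nu_m^{-1/3}}$, and the fallback then contributes $3\cdot\tO{L^3\nu_m^{-1/3}}$ to $\E(W_m)$, which is larger than the target $c_1\nu_m^{-2/3}$ by a factor $\tO{\nu_m^{1/3}}$ --- at every round, not just near $m=n-n^{1/4}$. This is precisely the point where the paper writes, after \eqref{EZ}, ``Unfortunately, this is not good enough to prove \eqref{Xell},'' and then deploys the additional idea your proposal is missing: the bias events on \emph{disjoint} pairs nearly multiply, so the probability that the biased overlap contains a matching of constant size $s=40$ is $o(n^{-3})$ (inequality \eqref{compl}), and a bipartite edge set with no matching of size $s$ has at most $s(|I_m|+|J_m|)=O(L\nu_m^{1/3})$ edges; this vertex-cover step converts the weak first moment into the $o(n^{-3})$ tail that can safely be folded into the expectation. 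Your framework can be repaired along the same lines --- indeed the biased edges inside $M$ automatically form a matching, so the product bound gives $\Pr(\ge s\text{ biased edges in }M)\le \brac{\tO{L^3\nu_m^{-1/3}}}^s$, which is small enough for constant $s$; alternatively one can note that $\E\brac{(|M|-B)^{-1/3}}$ degrades only gracefully in the biased count $B$ and use the crude q.s.\ bound $W_m=O(L\nu_m^{-2/3})$ on the remaining event --- but as written, the Markov-plus-$W_m\le3$ argument does not close, and some such tail-strengthening step must be supplied.
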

\begin{proof}
We let $I_m,J_m$ be as defined in \eqref{defL}, where we replace $n$ in the definition by $\n_m=n-m+1$. We keep $L$ as $\log n$ though and replace $p$ by $p_m=\frac{L}{\n_m}$. The values $a_{m,j},b_{m,k}$ are independent of the first $m-1$ rounds of {\sc greedy}. Now $|I|_m,|J_m|$ are distributed as $Bin(\n_m,L\n_m^{-2/3})$ and equation \eqref{Chern} implies that \eqref{conc1} holds q.s. with $n$ replaced by $\n_m$. Next define $X_m$ iteratively via $X_0=\emptyset$ and 
\[
X_m=\set{(i,j)\in (I_m\times J_m)\setminus \bigcup_{l<m}X_l,\,c_{i,j}\leq \frac{L}{\n_m^{2/3}}}.
\]
We will show below that 
\beq{Xell}{
\Pr\brac{\card{(I_m\times J_m)\cap \bigcup_{l<m}X_l}\geq 400L\n_m^{1/3} }=o(n^{-3}). 
}
Observe that $c_{i,j}$ for $(i,j)\in X_m$ is unconditioned by the history of {\sc greedy} to this point. Indeed, we will not have needed to expose its value in order to compute the sequence $W_1,W_2,\ldots,$ $W_{m-1}$. But if \eqref{Xell} holds then the analysis of Section \ref{wts} implies that
\[
\E(W_m)\approx c_1\n_m^{-2/3}.
\]
Indeed, going back to \eqref{E2} we 
\[
\text{replace $\E(Bin(|I|\cdot|J|,p)^{-1/3})$ by $\E((Bin(|I_m|\cdot|J_m|-400L\n_m^{1/3} ,p_m))^{-1/3})$}
\]
and continue as before.

It remains to verify \eqref{Xell}. Thus let $Y_m=(I_m\times J_m)\cap \bigcup_{l<m}X_l$ and $Z=|Y_m|$. Now the sequence of choices $I_\ell,J_\ell,\ell\leq m$ are independent and then for $(x,y)\in I_m\times J_m$ and $\ell<m$ we have
\mult{one}{
\Pr((x,y)\in I_\ell\times J_\ell\mid (\ell,x,y)\text{ not added to $T$ in Step 4})\leq\\ \frac{\Pr((x,y)\in I_\ell\times J_\ell)}{\Pr((\ell,x,y)\text{ not added to $T$ in Step 4})} \leq \frac{\n_\ell^{-4/3}}{1-o(\n_\ell^{-2})}.
}
It follows (using \eqref{conc1}) that 
\beq{EZ}{
\E(Z)\leq 4L^2\n_m^{2/3}\sum_{\ell=1}^{m-1}\frac{1}{\n_\ell^{4/3}}\leq 13L^2\n_m^{1/3}.
}
Unfortunately, this is not good enough to prove \eqref{Xell}. Instead, suppose that $S=\set{(x_i,y_i),i\in [s]}\subseteq I_m\times J_m$ where $s=O(1)$ and $S$ is a matching. Then, 
\multstar{
\Pr(S\subseteq Y_m)\leq \sum_{i_1\leq \cdots\leq i_{s}} \Pr\brac{\bigcap_{t=1}^s\set{(x_t,y_t)\in X_{i_t}}}=\\
 \sum_{i_1\leq \cdots\leq i_{s}}\prod_{t=1}^s\Pr\brac{{(x_t,y_t)\in X_{i_t}}\bigg| \bigcap_{\t=1}^{t-1}\set{(x_\t,y_\t)\in X_{i_\t}}}\leq \sum_{i_1\leq \cdots\leq i_{s}}\prod_{t=1}^s((1+o(1))\n_{i_t}^{-4/3})\\
\leq \prod_{t=1}^s\sum_{l=1}^{m}\frac{1+o(1)}{(n-l+1)^{4/3}}\leq \bfrac{4}{(n-m)^{1/3}}^s.
}
Thus,
\beq{compl}{
\Pr(\exists\text{ matching } S,|S|=s\mid \eqref{conc1})\leq \binom{10L^3}{s}\bfrac{4}{n^{1/12}}^s=o(n^{-3})
}
if $s=40$. Finally observe that if the maximum size of $S=s\leq 40$ and $|I_m|,|J_m|\leq 10L\n_m^{1/3}$ then $|Y_m|\leq s(|I_m|+|J_m|)\leq 10sL\n_m^{1/3}$ and the condition in \eqref{Xell} holds. 
\end{proof}
Given Lemma \ref{lem2} we see that the expected cost of the assignment produced by {\sc greedy} is at most
\beq{greedycost}{
(c_1+o(1))\sum_{m=1}^{n-n^{1/4}}\frac{1}{(n-m+1)^{2/3}}+n^{1/4}\approx 3c_1n^{1/3}.
}
The final $n-n^{1/4}$ steps cost at most $3$ per step and this completes the proof of Theorem \ref{th1}.
\section{Higher Dimensions}\label{Higher}
Consider for example 4 dimensions. Here we have two reasonable options.
\begin{enumerate}
\item $W_{i,j,k,l}=a_{i,j}+b_{i,k}+c_{i,l}+d_{j,k}+e_{j,l}+f_{k,l}$.
\item $W_{i,j,k,l}=a_{i,j,k}+b_{i,j,l}+c_{i,k,l}+d_{j,k,l}$.
\end{enumerate}
We have not considered the first option. The second option is a strightforward generalisation of what we have done so far. Here we will sketch a proof as a series of bullet points that the optimum and the greedy solution for the $d$-dimensional problem grow at rate $n^{1/d}$ in expectation. By the $d$-dimensional problem we mean
\[
Minimize \set{\sum_{i=1}^nW_{i,\s_1(i),\ldots,\s_{d-1}(i)}:\;\s_1,\ldots,,\s_{d-1}\text{ are permutations of }[n]}.
\]
where
\[
W_{i_1,\ldots,i_d}=\sum_{j=1}^dA^{(j)}_{i_1,\ldots,i_{j-1},i_{j+1},\ldots,i_d}\text{ is the sum of independent uniform }[0,1]\text{ random variables}.
\]
We claim that Theorem \ref{th1} can be generalised to 
\begin{theorem}\label{thd}
Suppose that $d\geq 3$. Then there exist constants $c_d,C_d$ such that (a) $\E(Z_n)\geq c_dn^{1/d}$ and (b) {\sc greedy}($n^{1/(d+1)}$) finds a solution of expected value at most $C_dn^{1/d}$. In this case Step 6 can be completed by choosing an arbitrary completion.
\end{theorem}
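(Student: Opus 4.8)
My plan is to follow the three–dimensional argument almost verbatim, re-running the moment and order–statistic estimates in dimension $d$, treating parts (a) and (b) separately.

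Part (a) needs only a first moment estimate and replaces Lemma \ref{lem1}. Fix the first coordinate $i_1=i$ and let $M_i=\min\set{W_{i,i_2,\ldots,i_d}:i_2,\ldots,i_d\in[n]}$ be the least weight in the plane $\set{i}\times[n]^{d-1}$. For a fixed tuple the $d$ summands $A^{(1)},\ldots,A^{(d)}$ come from distinct arrays, hence are independent $U[0,1]$ variables, so $\Pr(W\le s)=s^d/d!$ for $s\le1$. There are $n^{d-1}$ tuples in the plane, so the expected number of weight at most $s=\e n^{-(d-1)/d}$ is $n^{d-1}s^d/d!=\e^d/d!$, and Markov's inequality gives $\Pr(M_i\le \e n^{-(d-1)/d})\le \e^d/d!$. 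Fixing $\e$ a small constant yields $\E(M_i)\ge\tfrac12\e\,n^{-(d-1)/d}$, and since every assignment obeys $Z_n\ge\sum_{i=1}^nM_i$, linearity gives $\E(Z_n)\ge\tfrac12\e\,n^{1/d}=c_dn^{1/d}$.

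Part (b) rests on the analogue of Lemma \ref{lem2}: for $m\le n-n^{1/(d+1)}$, writing $\n_m=n-m+1$, I will show $\E(W_m)\lesssim C_d\,\n_m^{-(d-1)/d}$. Granting this, summing over $m$ exactly as in \eqref{greedycost}, and bounding each of the last $n^{1/(d+1)}$ rounds by $d$, gives a total of order $\int_{n^{1/(d+1)}}^{n}x^{-(d-1)/d}\,dx=O(n^{1/d})$, since $n^{1/(d+1)}=o(n^{1/d})$. To bound a single round I fix the row $i_1=m$ and observe that the $d-1$ arrays $A^{(j)}$ with $j\ge2$, all of which involve the fresh index $i_1=m$, are untouched by the first $m-1$ rounds; only $A^{(1)}$, which omits $i_1$, carries history. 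Over the $\n_m$ surviving indices in each remaining coordinate I build, mimicking \eqref{defL}, the set $X_m$ of tuples whose $d$ summands all lie below $p_m=L\n_m^{-(d-1)/d}$, discarding any tuple already seen in an earlier round. Conditioned on the region $R_m=\set{\tau:\text{all }d-1\text{ fresh summands of }\tau\text{ are }\le p_m}$, the values $A^{(1)}_\tau$ are independent across distinct $\tau$, so $|X_m|$ is a genuine binomial $\mathrm{Bin}(|R_m|,p_m)$; a second moment estimate for $|R_m|$ (whose mean is $L^{d-1}\n_m^{(d-1)/d}$) together with \eqref{Chern} shows $|X_m|=\Theta(L^d)$ with failure probability polynomially small in $\n_m$. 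On $X_m$ each weight is distributed as $p_m(U_1+\cdots+U_d)$, and the minimum of $\Theta(L^d)$ nearly independent such copies is $\Theta(\n_m^{-(d-1)/d})$, the factors of $L$ cancelling exactly as in \eqref{I1I2I3}--\eqref{E5}. Since $W_m$ is at most this minimum, the per-round bound follows.

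Two collision estimates replace their $d=3$ counterparts. The order–statistic step requires the weights on $X_m$ to be almost independent; two distinct $(d-1)$-tuples share one of the arrays $A^{(j)}$, $j\ge2$, exactly when they agree in all but one coordinate, i.e.\ are at Hamming distance $1$. Thus the role of $\cE_M$ is taken by the event that the $\Theta(L^d)$ candidate tuples are pairwise at Hamming distance $\ge2$ up to $O(1)$ exceptions; a union bound shows the expected number of distance-$1$ pairs is polynomially small in $\n_m$ for every $d\ge3$, so Claim \ref{cl1} generalises with no new ideas. The genuinely harder part, and the one I expect to be the main obstacle, is controlling the history carried by the single shared array $A^{(1)}$: I must establish the analogue of \eqref{Xell}, that the number of current candidate tuples whose $A^{(1)}$ value was exposed in an earlier round is negligible beside $|X_m|$. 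This is the generalisation of the matching union bound \eqref{compl}; for a spread-out family $S$ of $s$ current tuples one bounds the probability that every member of $S$ coincided with a below-threshold candidate of some earlier round by a product $\prod_{t\in S}\sum_{\ell<m}p_\ell^{\,d-1}$, and then sums over all such $S$, choosing $s=s(d)$ large enough that the bound is $o(n^{-K})$. The bookkeeping is heavier than in three dimensions because $A^{(1)}$ is now a $(d-1)$-dimensional array, so a single earlier candidate can block an entire slice of the current region rather than one entry; arranging the exponents so that this union bound becomes negligible precisely at the cutoff $\n_m=n^{1/(d+1)}$ is where the real work concentrates.
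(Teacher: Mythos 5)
Your plan is, for part (b), essentially the paper's own argument: the same threshold $p_m=L\nu_m^{-(d-1)/d}$, a candidate set of size $\Theta(L^d)$, near-independence of the candidate weights enforced by a spread-out condition with $O(1)$ exceptions, the order-statistics integral giving a minimum of order $p_m(d!)^{1/d}\Gamma(1+1/d)/|X_m|^{1/d}\approx \nu_m^{-(d-1)/d}$, and the history on the single shared array $A^{(1)}$ controlled by exactly the union bound of \eqref{compl}, with per-round exposure probability $p_\ell^{d-1}$ for the $d-1$ fresh summands. Two of your refinements actually improve on the paper's sketch: (i) pairwise Hamming distance $\geq 2$ is indeed the minimal condition making the candidate weights independent (two tuples collide in some fresh array $A^{(j)}$, $j\geq 2$, exactly when they are at Hamming distance $1$), whereas the paper asks for a matching in a $(d-1)$-uniform hypergraph, i.e.\ pairwise distance $d-1$, which is stronger than needed; and (ii) your evaluation $\sum_{\ell<m}p_\ell^{d-1}\approx L^{d-1}\nu_m^{-(d^2-3d+1)/d}$ is the correct one — the paper's bullet (i) writes $n^{-(d-1)^2/(d(d+1))}$ per factor, overlooking the factor $\nu_m$ gained by summing over $\ell$ (compare the exponent $-1/3$, not $-4/3$, in \eqref{compl} for $d=3$); this only forces a larger $s(d)$ and does not affect the conclusion. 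Where you genuinely diverge is part (a): the paper routes the lower bound through the full two-sided asymptotics of (the $d$-dimensional) Lemma \ref{lem1}, with the matching-plus-exceptions machinery needed to make $\E(W_n)$ not too small; your Markov bound $\Pr(M_i\leq \varepsilon n^{-(d-1)/d})\leq \varepsilon^d/d!$ together with $Z_n\geq\sum_i M_i$ (no independence of the $M_i$ needed) gives $\E(Z_n)\geq c_d n^{1/d}$ in a few lines. Since the theorem only asserts existence of $c_d$, this is a legitimate and cleaner route; what it gives up is the sharp constant $(d!)^{1/d}\Gamma(1+1/d)$ that the paper's computation yields. (You also silently, and correctly, read {\sc greedy}($n^{1/(d+1)}$) as {\sc greedy}($n-n^{1/(d+1)}$), consistent with \eqref{greedycost} and the $d=3$ case.)

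One step of your plan as stated is too weak, and you should repair it rather than assert it. For $d\geq 4$ the region $R_m$ is not a product of independently-sized sets: each fresh summand $A^{(j)}$, $j\geq2$, depends on $d-2$ free coordinates, so $|R_m|$ is a correlated count, and a second-moment/Chebyshev estimate gives failure probability only of order $(\nu_m p_m)^{-1}=\Theta\brac{L^{-1}\nu_m^{-1/d}}$ (the dominant covariance comes from pairs sharing one fresh summand). That is not $o(\nu_m^{-(d-1)/d})$ for $d\geq3$, and since a failed round can only be charged cost $d$, summing $d\Pr(\text{fail})$ over the rounds would swamp the $O(n^{1/d})$ target. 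You need quite-surely--type concentration for $|R_m|$, e.g.\ by exposing the $d-1$ fresh arrays one at a time and applying \eqref{Chern} conditionally at each layer (the paper's sketch hides the same issue inside its product sets $I_1\times\cdots\times I_{d-1}$, which do not typecheck for $d\geq4$, so the repair is needed there too). Relatedly, your diagnosis of the covering step as the real work is accurate, and here is the missing calculation: a maximal Hamming-$\geq2$-spread subfamily of $Y_m$ of size $\leq s$ confines $Y_m$ to $s$ unions of $d-1$ coordinate lines, of total size $O(s\,d\,\nu_m)$, which is useless on its own since $\nu_m\gg|R_m|$; what saves it is intersecting with $R_m$: along a line varying coordinate $j$ the summand $A^{(j)}$ is frozen while the other $d-2$ fresh summands vary, so each line meets $R_m$ in expected size $O(\nu_m p_m^{d-2})=O\brac{L^{d-2}\nu_m^{1-(d-1)(d-2)/d}}$, giving $|Y_m|\lesssim s\,d\,L^{d-2}\nu_m^{1-(d-1)(d-2)/d}=o(|R_m|/L)$, the analogue of \eqref{Xell}. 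With these two repairs your outline matches the paper's sketch step for step.
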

{\bf Proof Sketch:}\\
We can follow the argument in Lemma \ref{lem1} essentially replacing $n^{1/3}$ by $n^{1/d}$ and $n^{2/3}$ by $n^{(d-1)/d}$. In effect,  we make the following replacements:
\begin{enumerate}[{\bf (a):}]
\item $p$ becomes $L/n^{(d-1)/d}$.
\item $I,J$ will be replaced by $I_1,\ldots,I_{d-1}$ of expected size $np$.
\item In which case $X$ becomes $\set{{\bf j}\in (I_1\times\cdots\times I_{d-1}):W_{i,{\bf j}}\leq p}$.
\item \eqref{sizeX} becomes $|X|=\Theta(L^d)$.
\item A matching now means a matching in a $(d-1)$-uniform hypergraph $H$ induced by $I_1\times\cdots\times I_{d-1}$. In the proof of Claim \ref{cl1}, we now let $K=|I_1|+\cdots+|I_{d-2}|$. We now claim that with probability $\frac{1}{n^{1-o(1)}}$ there are at most $\frac{d}{\ell-1}$ components of $H$ with $\ell\leq d+1$ edges and no components with $d+2$ or more edges. Indeed, the probability that there are $a$ components of $H$ with $\ell$ edges can be bounded by
\[
\brac{\binom{K}{\ell+1+\ell(d-3)}p^{\ell}}^a=O\bfrac{L^{1+(d-1)\ell}}{n^{(\ell-1)/d}}^a.
\]
This verifies the claim and shows that if $\cE_M$ is the event that $X$ defines a matching plus $O(1)$ edges, then $\neg\cE_M$ is unlikely enough so that we can use \eqref{ExZ}. 
\item The sum $p(I_1+I_2+I_3)$ becomes $p(I_1+\cdots+I_d)$ which is dominated by $pI_1$ where 
\[
I_1=\int_{u=0}^1\brac{1-\frac{u^d}{d!}}^{|X|}du\approx \frac{1}{|X|^{1/d}}\int_{x=0}^\infty e^{-x^d/d!}dx=\frac{(d!)^{1/d}\G(1+1/d)}{|X|^{1/d}}.
\]
\item After this we find that \eqref{E3} becomes
\[
E(W_n\mid \cE_L,\cE_M)\approx \frac{(d!)^{1/d}\G(1+1/d)p}{(|I_1|\cdots |I_{d-1}|\cdot p)^{1/d}}
\]
\item Because the $|I_j|$ are strongly concentrated about their means, this results in replacing \eqref{E5} by
\[
E(W_n\mid \cE_M)\approx \frac{(d!)^{1/d}\G(1+1/d)}{n^{(d-1)/d}}.
\]
Multiplying by $n$ gives us part (a) of Theorem \ref{thd} with $c_d=(d!)^{1/d}\G(1+1/d)$.
\item The essential part of (b) is the inequality \eqref{compl}. For this, where $S=\set{x_{\bf i_l}:l=1,2,\ldots,s}$ is a  matching in $H$ and $m\geq n-n^{1/(d+1)}$, we use
\multstar{
\Pr(S\subseteq Y_m)\leq \sum_{i_1\leq \cdots\leq i_{s}} \Pr\brac{\bigcap_{t=1}^s\set{x_{\bf i_t}\in X_{i_t}}}=\\
 \sum_{i_1\leq \cdots\leq i_{s}}\prod_{t=1}^s\Pr\brac{x_{{\bf i_t}}\in X_{i_t}\bigg| \bigcap_{\t=1}^{t-1}\set{x_{\boldsymbol \t}\in X_{i_\t}}}\leq \sum_{i_1\leq \cdots\leq i_{s}}\prod_{t=1}^s((1+o(1))\n_{i_t}^{-(d-1)^2/d})\\
\leq \prod_{t=1}^s\sum_{l=1}^{n-m+1}\frac{1+o(1)}{(n-l+1)^{(d-1)^2/d}}\leq O\bfrac{1}{n^{(d-1)^2/(d(d+1))}}^s=O(n^{-3}),
}
for $s\geq 3d(d+1)/(d-1)^2$.

We deduce from this that we can replace \eqref{greedycost} by
\[
(c_d+o(1))\sum_{m=1}^{n-n^{1/(d+1)}}\frac{1}{(n-m+1)^{(d-1)/d}}+n^{1/(d+1)}=O(n^{1/d}).
\]
\end{enumerate}  
The final $n-n^{1/(d+1)}$ steps cost at most $d$ per step and this completes our sketch proof of Theorem \ref{thd}.
\section{Greedy for small $L$}\label{Easycase}
When $W_{i,j,k}$ is chosen uniformly from $[1,M=n^{\a}],0<\a<1$ we 
\begin{enumerate}[{\bf (a):}]
\item Let $Z_m$ denote the cost of the $m$th triple. Then for $1\leq m\leq n$ and $a\geq 1$,
\[
\Pr(\exists m:Z_m\geq a)\leq n\brac{1-\frac{a}{M}}^{(n-m+1)^2}\leq n\exp\set{-\frac{a(n-m+1)^2}{M}} \leq n^{-2},
\]
if
\beq{aval}{
a\geq \frac{3M\log n}{(n-m+1)^2}.
}
Putting $m_0=n-(3M\log n)^{1/2}$ we see that $a$ satisfies \eqref{aval} for
\[
a=\begin{cases}1&m\leq m_0.\\\rdup{\frac{3M\log n}{(n-m+1)^2}}&m>m_0.\end{cases}
\]
It follows that w.h.p. and in expectation that if $m_1=n-\log n$, then
\[
\sum_{m=1}^nZ_m\leq m_0+\sum_{m=m_0+1}^{m_1}\frac{3M\log n}{(n-m+1)^2}+M(n-m_1)=n+o(n),
\]
\end{enumerate}
\section {Greedy versus Greedy}
There is another version of the greedy algorithm where at each step we choose the ``tple'' of minimum weight that can be added to the current choice. Let $E(\l)$ denote the exponential rate $k$ random variable i.e. $\Pr(E(\l)\geq u)=e^{-\l u}$. We consider the $d$-dimensional case and argue next that if the weights $W_{i_1,\ldots,i_d}$ are independent $E(1)$ then the value of the solution given by the two algorithms is the same in distribution. So let $G_{n,1}$ be the value returned by the algorithm described above and let $G_{n,2}$ be the value returned by algorithm described in this section. We claim that $G_{n,1}$ and $G_{n,2}$ have the same distribution.

The distribution of $G_{n,1}$ is $E(n^{d-1})+G_{n-1,1}$ and the distribution of $G_{n,2}$ is $E(n^d)(1+(n-1))+G_{n-1,2}$. The term $E(n^d)(n-1)$ is a result of the fact that conditioning an exponential to be greater than $x$ is equivalent to adding $x$ to a copy of that variable. Then observe that $E(n^{d-1})=nE(n^d)$. The claim follows by induction.

Note that coincidentally, when $d=3$, $\E(G_{n,1})$ is equal to the  expected optimum value for the $d=2$ case, see \cite{NaPrSh} and \cite{Wastlund04}. This does not generalise.
\section{Final Comments}
We have analysed a random multi-dimensional assignment problem with a particular form of objective fucntion. We have shown that w.h.p. there is a simple greedy algorithm that is a $(3+o(1)$-approximation to the minimum.  It is possible to replace the 3 here by $3-\e$, by arguing that w.h.p. the optimum solution must use the (at least) second smallest $j,k$ (when $d=3$) for $\Omega(n)$ values of $i$. We omit the details as the real aim is to replace 3 by 1.

\end{document}